\documentclass{article}
%
%  inserire, prima del \begin{document}, \input miolatex.tex
%
\usepackage{amssymb,amsmath,bm,geometry,graphics,graphicx,url,color}
\usepackage{amsfonts}
\usepackage{graphicx}
\usepackage{multirow}
\usepackage{amscd}
\usepackage{enumerate}
\usepackage{mathrsfs}
\usepackage{xypic}
\usepackage{stmaryrd}
\usepackage{fancybox}
\usepackage{exscale}
\usepackage{enumitem,array}%
\usepackage{graphicx}

%%%%%%%%%%%%%%%%%%%% my def

\def\pdt2{\partial_t^2}
\def\pdx2{\partial_x^2}
\newcommand{\norm}[1]{\left\Vert#1\right\Vert}
\newcommand{\normmm}[1]{{\left\vert\kern-0.25ex\left\vert\kern-0.25ex\left\vert #1
    \right\vert\kern-0.25ex\right\vert\kern-0.25ex\right\vert}}

\newtheorem{theo}{Theorem}[section]

\newtheorem{rem}[theo]{Remark}
\newtheorem{defi}[theo]{Definition}

\def\no{\noindent}

\title{Symmetric and symplectic exponential integrators
 for nonlinear Hamiltonian systems}

\author{Yajun Wu\,\footnote{School of Mathematical Sciences, Qufu Normal
University, Qufu  273165,  P.R.China. E-mail:~{\tt
1921170786@qq.com}} \and Bin Wang\thanks{School of Mathematical
Sciences, Qufu Normal University, Qufu  273165, P.R.China;
Mathematisches Institut, University of T\"{u}bingen, Auf der
Morgenstelle 10, 72076 T\"{u}bingen, Germany. E-mail:~{\tt
wang@na.uni-tuebingen.de}} }

\begin{document}
\maketitle
\begin{abstract}
This letter studies symmetric and symplectic exponential integrators
when applied  to numerically computing nonlinear Hamiltonian
systems.   We first establish the symmetry and symplecticity
conditions of exponential integrators and then show that these
conditions are extensions of the symmetry and symplecticity
conditions  of Runge-Kutta methods. Based on these conditions, some
symmetric and symplectic exponential integrators up to order four
are derived. Two numerical experiments are carried out and the
results demonstrate the remarkable numerical behavior of the new
 exponential integrators in comparison with
some symmetric and symplectic Runge-Kutta methods in the literature.
\medskip

\no{Keywords:} exponential integrators; symmetric methods;
symplectic methods; Hamiltonian systems

\medskip
\no{MSC (2000):} 65L05, 65P10
\end{abstract}

%=============================================================================================
\section{Introduction}
 In this letter, we explore  efficient symmetric and symplectic methods for solving the   initial
 value problems expressed in the following from
\begin{equation}
y'(t)=My(t)+f(y(t)),\qquad t\in [t_0,T],\qquad
y(t_0)=y_0,\label{prob}
\end{equation}
where $(-M)$ is assumed to be a  linear operator on a Banach space
$X$ with a norm $\norm{\cdot}$, $M$ is the infinitesimal generator
of a strongly continuous semigroup $e^{tM}$ on $X$  and the function
$f: \mathbb{R}^{d}\rightarrow \mathbb{R}^{d}$ is analytic (see, e.g.
\cite{Hochbruck2010}). It follows from the assumption of $M$   that
there exist  two constants $C$ and $\omega$  such that
 \begin{equation}\norm{e^{tM}}_{X\leftarrow X}\leq Ce^{\omega t},\ \ \ \ \ t\geq0.\label{condition A}%
\end{equation} We
note that the linear operator $-M$ can be   a $d\times d$ matrix if
$X$ is chosen as $X=\mathbb{R}^{d}$ or $X=\mathbb{C}^{d}$. Under
this situation, $e^{tM}$  is accordingly  the matrix exponential
function.    It is known that  the exact solution of \eqref{prob}
can be represented by the variation-of-constants formula
  \begin{equation}
y(t)= e^{tM}y_0+  \int_{0}^t e^{(t-\tau)M}f(y(\tau))d\tau.\\
\label{probsystemF3}%
\end{equation}

  %In the special and important case where  $A$ is
%skew-Hermitian or Hermitian positive semidefinite,  we have  $C = 1$
%and $\omega=0$ in the Euclidean norm, independently of the dimension
%$d$. If $A$  originates  from a spatial  discretisation of a partial
%differential equation, then the assumption of $A$  leads to temporal
%convergence results that are independent of the spatial mesh.

Problems of the form \eqref{prob} often arise in a wide range of
practical applications such as quantum physics, engineering,
flexible body dynamics, mechanics, circuit simulations and other
applied sciences (see, e.g.
\cite{Grimm2006,Hochbruck2010,Wang2018,wu-book}). Some highly
oscillatory problems,  Schr\"{o}dinger equations, parabolic partial
differential equations with their spatial discretisations all fit
the form. In order to solve \eqref{prob} effectively, many
researches have been done and the readers are referred to
\cite{Iserles2002a,Iserles2002b,Kassam2005,Khanamiryan2008,Krogstad2003}
for example. Among them, a standard form of exponential integrators
is formulated and these integrators have been studied by many
researchers. We refer the
 reader to \cite{Berland-2005,Caliari-2009,Calvo-2006,Celledoni-2008,Grimm2006,Hochbruck2009,Ostermann2006,wang-2016,IMA2018,Wang2017}
for some examples on this topic  and a systematic survey of
exponential integrators is referred to \cite{Hochbruck2010}.

On the other hand,  it can be observed that the problem \eqref{prob}
can become  a nonlinear  Hamiltonian system if
$$f(y)=J^{-1}\nabla U(y),\ \ M=J^{-1}Q,$$ where $U (y )$ is a smooth
potential function, $Q$ is a symmetric matrix, and $J=\left(
                             \begin{array}{cc}
                               0 & I \\
                               -I& 0 \\
                             \end{array}
                           \right)
$ with   the identity $I$. The energy of this Hamiltonian system is
$H(y)=\dfrac{1}{2}y^{\intercal}Qy+U(y)$. For this system, symplectic
exponential integrators (see \cite{WuMei2017JCP}) are strongly
recommended since they can preserve the symplecticity of the
original problems  and  provide good long time energy preservation
and stability.
 Besides, it is shown in \cite{hairer2006} that symmetric methods also have
excellent long time behaviour when applied to reversible
differential equations and symmetric exponential integrators have
been considered for solving Schr\"{o}dinger equations in
\cite{Celledoni-2008}. However, it seems that symmetric exponential
integrators have not been used for ODEs and moreover symmetric and
symplectic exponential integrators have never been studied so far,
which motives this letter.

The  main  contribution of this letter is to analyse and derive
symmetric and symplectic exponential integrators. The integrators
have symmerty and symplecticity simultaneously. The letter is
organized as follows. In Section \ref{sec2}, we present the scheme
of exponential integrators and derive its properties including the
symmetry and symplecticity conditions. Then we are devoted to the
construction of some practical symmetric and symplectic exponential
integrators in Section \ref{sec3}.  In Section \ref{sec4}, we carry
out two numerical experiments and the numerical results demonstrate
the remarkable efficiency of the new integrators in comparison with
some existing methods in the scientific literature. The last section
is concerned with conclusions.

\section{Exponential integrators and their properties}\label{sec2} In this section, we first present
the scheme of exponential integrators and then analyze their
symmetry and symplecticity conditions.
\begin{defi} (See \cite{Hochbruck2010}.)
An $s$-stage exponential integrator (EI) for the  problem
\eqref{prob} is defined by
\begin{equation}
\begin{cases}
\begin{array}[c]{ll}%
Y_{i}&=e^{c_{i}hM}y_{0}+h\textstyle\sum\limits_{j=1}^{s}\bar{a}_{ij}(hM)f(Y_{j}),\qquad\
i=1,\ldots,s,\\
y_{1}&=e^{hM}y_{0}+h\textstyle\sum\limits_{i=1}^{s}\bar{b}_{i}(hM)f(Y_{i}),
\end{array}
\end{cases}
 \label{erk dingyi}%
\end{equation}
where $c_i$ are constants and $\bar{a}_{ij}(hM)$ and
$\bar{b}_{i}(hM)$ are matrix-valued functions of $hM$.
\end{defi}

\begin{rem}It is worth mentioning that if
$M\rightarrow0$ an exponential integrator $\eqref{erk dingyi}$
reduces to a classical Runge-Kutta (RK) method with the coefficients
$c_i, \bar{a}_{ij}(0), \bar{b}_{i}(0)$ for $i,j=1,\ldots,s$.
\end{rem}

The next theorem gives the symmetry conditions of the EI method.
\begin{theo}\label{the symmetry conditions of the ERK}%
An s-stage exponential integrator $\eqref{erk dingyi}$ is symmetric
if and only if its coefficients satisfy the following conditions:
\begin{equation}
\begin{array}[c]{ll}%
c_{i}=1-c_{s+1-i},\quad\ &i=1,2,\ldots,s,\\
\bar{a}_{ij}(hM)=e^{c_{i}hM}\bar{b}_{s+1-j}(-hM)-\bar{a}_{s+1-i,s+1-j}(-hM),\quad\ &i,j=1,2,\ldots,s,\\
\bar{b}_{i}(hM)=e^{hM}\bar{b}_{s+1-i}(-hM),\quad\ &i=1,2,\ldots,s.
\end{array}\label{erkdc}%
\end{equation}
\end{theo}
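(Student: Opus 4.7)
The natural strategy is to compute the adjoint method $\Phi_h^{*}:=\Phi_{-h}^{-1}$ of the exponential integrator $\Phi_h$ defined by \eqref{erk dingyi} and require $\Phi_h^{*}=\Phi_h$. To realize $\Phi_h^{*}$ explicitly, I would start from $y_0$, introduce an unknown output $y_1^{*}$ and internal stages $Y_i$, and impose that applying \eqref{erk dingyi} with step $-h$ to $y_1^{*}$ returns $y_0$. Solving the resulting output relation for $y_1^{*}$ by multiplication with $e^{hM}$ gives
\[
y_1^{*}=e^{hM}y_0+h\sum_{i=1}^s e^{hM}\bar{b}_i(-hM)f(Y_i),
\]
and substituting this back into the stage equations, after using $e^{-c_ihM}\cdot e^{hM}=e^{(1-c_i)hM}$, reorganises them as
\[
Y_i=e^{(1-c_i)hM}y_0+h\sum_{j=1}^s\bigl[e^{(1-c_i)hM}\bar{b}_j(-hM)-\bar{a}_{ij}(-hM)\bigr]f(Y_j).
\]

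The second step is to put this explicit representation of $\Phi_h^{*}$ into the canonical shape of \eqref{erk dingyi}. Since the adjoint stages are naturally indexed in reverse order relative to the original ones, I would relabel $Y_i\mapsto Y_{s+1-i}^{*}$ and correspondingly $j\mapsto s+1-l$ in the inner sums. This produces an $s$-stage exponential integrator with coefficients
\[
c_k^{*}=1-c_{s+1-k},\qquad \bar{b}_k^{*}(hM)=e^{hM}\bar{b}_{s+1-k}(-hM),
\]
\[
\bar{a}_{kl}^{*}(hM)=e^{(1-c_{s+1-k})hM}\bar{b}_{s+1-l}(-hM)-\bar{a}_{s+1-k,s+1-l}(-hM).
\]
Demanding $c_k^{*}=c_k$, $\bar{b}_k^{*}(hM)=\bar{b}_k(hM)$ and $\bar{a}_{kl}^{*}(hM)=\bar{a}_{kl}(hM)$ yields the three identities in \eqref{erkdc}; the nodes identity is then used to rewrite $e^{(1-c_{s+1-k})hM}$ as $e^{c_khM}$ inside the second condition, matching the stated form. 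Reading the derivation in reverse produces the converse, giving both implications.

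The main place where care is required is the bookkeeping of the two reindexings $i\mapsto s+1-i$ and $j\mapsto s+1-j$ combined with the sign change $h\mapsto -h$; in particular, the crucial identity $e^{-c_{s+1-k}hM}\cdot e^{hM}=e^{c_khM}$ rests on the nodes condition and is precisely what couples the scalar condition on the $c_i$ to the operator-valued conditions on $\bar{a}_{ij}$ and $\bar{b}_i$. All matrix factors appearing are analytic functions of the single operator $hM$ and therefore commute, so comparing coefficients of the two representations of $\Phi_h^{*}$ and $\Phi_h$ is unambiguous and the identification of coefficients is legitimate.
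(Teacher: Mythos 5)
Your proposal is correct and follows essentially the same route as the paper: compute the adjoint by swapping $y_0\leftrightarrow y_1$ and $h\mapsto -h$, solve the output relation for $y_1$, substitute into the stage equations, reindex $i\mapsto s+1-i$, and identify coefficients (using the nodes condition to rewrite $e^{(1-c_{s+1-i})hM}$ as $e^{c_ihM}$). Your version is slightly more explicit about the relabelling and the commutativity of the matrix functions, but the argument is the same.
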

\begin{proof}
For the exponential integrator $\eqref{erk dingyi}$, exchanging
${1}\leftrightarrow{0}$ and replacing $h$ by $-h$ yields
\begin{equation}
\begin{array}[c]{ll}
\widehat{Y}_{i}&=e^{-c_{i}hM}y_{1}-h\textstyle\sum\limits_{j=1}^{s}\bar{a}_{ij}(-hM)f(\widehat{Y}_{j}),\quad\
i=1,\ldots,s,\\
y_{0}&=e^{-hM}y_{1}-h\textstyle\sum\limits_{i=1}^{s}\bar{b}_{i}(-hM)f(\widehat{Y}_{i}).
\end{array}\label{erkd}
\end{equation}
Then we have\\
\begin{equation}
\begin{array}[c]{ll}
y_{1}&=e^{hM}y_{0}+h\textstyle\sum\limits_{i=1}^{s}e^{hM}\bar{b}_{i}(-hM)f(\widehat{Y}_{i}),\\
\widehat{Y}_{i}&=e^{(1-c_{i})hM}y_{0}+h\textstyle\sum\limits_{j=1}^{s}e^{(1-c_{i})hM}\bar{b}_{j}(-hM)f(\widehat{Y_{j}})
-h\textstyle\sum\limits_{j=1}^{s}\bar{a}_{ij}(-hM)f(\widehat{Y}_{j})
\\&=e^{(1-c_{i}hM}y_{0}+h\textstyle\sum\limits_{j=1}^{s}\Big(e^{(1-c_{i})hM}\bar{b}_{j}(-hM)-\bar{a}_{ij}(-hM)\Big)f(\widehat{Y}_{j}).
\end{array}\label{erke}
\end{equation}
For the second formula of \eqref{erke} and the first formula of
$\eqref{erk dingyi}$, it is required that the following conditions
are true
$$Y_{1}=\widehat{Y}_{s},\ \ Y_{2}=\widehat{Y}_{s-1},\ \ \cdots,\ \ Y_{s}=\widehat{Y}_{1}.$$
Based on these conditions, we obtain that the following two formulae
are equal
\begin{equation}
\begin{array}{ll}
Y_{i}&=e^{c_{i}hM}y_{0}+h\textstyle\sum\limits_{j=1}^{s}\bar{a}_{ij}(hM)f(Y_{j}),\\
\widehat{Y}_{s+1-i}&=e^{(1-c_{s+1-i})hM}y_{0}+h\textstyle\sum\limits_{j=1}^{s}\Big(e^{(1-c_{s+1-i})hM}
\bar{b}_{j}(-hM)-\bar{a}_{s+1-i,j}(-hM)\Big)f(\widehat{Y}_{j}).
\end{array}\label{erkf}
\end{equation}
This implies that
\begin{equation}
\begin{array}{ll}
c_{i}=1-c_{s+1-i}, \quad\ &i=1,2,\ldots,s,\\
\bar{a}_{ij}(hM)=e^{(1-c_{s+1-i})hM}\bar{b}_{s+1-j}(-hM)-\bar{a}_{s+1-i,s+1-j}(-hM),\quad\
&i,j=1,2,\ldots,s.
\end{array}
\end{equation}\\
Again, according to the second formula of $\eqref{erk dingyi}$ and
the first formula of $\eqref{erke}$, we obtain the third result of
\eqref{erkdc}.
 Therefore, the exponential integrator $\eqref{erk dingyi}$
is symmetric if and only if the conditions \eqref{erkdc} hold.
\end{proof}

\begin{rem}
It is noted that when $M=0$, these symmetry conditions become
\begin{equation}
\begin{array}[c]{ll}%
c_{i}=1-c_{s+1-i},\quad\ &i=1,2,\ldots,s,\\
\bar{a}_{ij}(0)=\bar{b}_{s+1-j}(0)-\bar{a}_{s+1-i,s+1-j}(0),\quad\ &i,j=1,2,\ldots,s,\\
\bar{b}_{i}(0)=\bar{b}_{s+1-i}(0),\quad\ &i=1,2,\ldots,s,
\end{array}\label{rkdc}%
\end{equation}
which are the exact symmetry conditions  of $s$-stage RK methods.
\end{rem}

About the symplecticity conditions of exponential integrators, we
have the following result.
\begin{theo}\label{symplecticity conditions of the ERK}%
(See \cite{WuMei2017JCP}.) If the coefficients of an $s$-stage
exponential integrator $\eqref{erk dingyi}$  satisfy
\begin{equation}
\begin{array}[c]{ll}%
\bar{b}_{i}(hM)^{T}JSS_i^{-1}=S_i^{-T}S^{T}J\bar{b}_{i}(hM)=\gamma J,\quad\ &\gamma \in \mathbb{R},\ \ i=1,2,\ldots,s,\\
\bar{b}_{i}(hM)^{T}J\bar{b}_{j}(hM)=\bar{b}_{i}(hM)^{T}JSS_i^{-1}\bar{a}_{ij}(hM)+\bar{a}_{ji}(hM)^{T}S_j^{-T}S^{T}J\bar{b}_{j}(hM),
\quad\ &i,j=1,2,\ldots,s,
\end{array}\label{erkdcss}%
\end{equation}
where  $S = e^{hM}$ and $S_i =e^{c_{i}hM}$ for $i = 1,\ldots,s,$
then the integrator is symplectic.
\end{theo}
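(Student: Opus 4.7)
The plan is to verify the defining identity $\Phi^T J \Phi = J$ directly for the Jacobian $\Phi = \partial y_1/\partial y_0$ of the one-step map from \eqref{erk dingyi} in the Hamiltonian setting $f(y) = J^{-1} \nabla U(y)$, $M = J^{-1} Q$. Two structural facts will be used throughout. Since $Q$ is symmetric, $M^T J + J M = 0$, whence $S^T J S = J$ and $S_i^T J S_i = J$ (i.e., the free flow is symplectic). Since $U$ is a scalar potential, $H_i := \nabla^2 U(Y_i)$ is symmetric and $f'(Y_i) = J^{-1} H_i$. Differentiating \eqref{erk dingyi} implicitly in $y_0$ and writing $\Phi_i := \partial Y_i/\partial y_0$, $K_i := J^{-1} H_i \Phi_i$, one gets $\Phi_i = S_i + h \sum_j \bar{a}_{ij}(hM) K_j$ and $\Phi = S + h \sum_i \bar{b}_i(hM) K_i$.

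Expanding $\Phi^T J \Phi$ and using $S^T J S = J$ for the leading piece reduces the claim to the vanishing of
\[
h \sum_i \big(S^T J \bar{b}_i(hM) K_i + K_i^T \bar{b}_i(hM)^T J S\big) + h^2 \sum_{i,j} K_i^T \bar{b}_i(hM)^T J \bar{b}_j(hM) K_j.
\]
The first condition of \eqref{erkdcss} gives $\bar{b}_i(hM)^T J S = \gamma J S_i$ and, by transposition, $S^T J \bar{b}_i(hM) = \gamma S_i^T J$. Inserting these and collapsing $J^{-1}J = I$ turns the linear-in-$h$ bracket into $\gamma h \sum_i [S_i^T H_i \Phi_i - \Phi_i^T H_i S_i]$. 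Next, using the stage equation to eliminate $S_i = \Phi_i - h \sum_j \bar{a}_{ij}(hM) K_j$ together with $H_i^T = H_i$, this reduces to $\gamma h^2 \sum_{ij}[A_{ij} - A_{ij}^T]$ where $A_{ij} := \Phi_i^T H_i \bar{a}_{ij}(hM) K_j$. For the quadratic part, the two conditions of \eqref{erkdcss} combine to $\bar{b}_i(hM)^T J \bar{b}_j(hM) = \gamma J \bar{a}_{ij}(hM) + \gamma \bar{a}_{ji}(hM)^T J$; substituting this in and using $J^2 = -I$, the identity $(u^T J v)^T = -v^T J u$, together with a dummy-index swap $i \leftrightarrow j$, turns the quadratic part into $\gamma h^2 \sum_{ij}[A_{ij}^T - A_{ij}]$. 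The two sums cancel, yielding $\Phi^T J \Phi = J$.

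The main obstacle is the non-commutative bookkeeping: the matrix coefficients $\bar{a}_{ij}(hM), \bar{b}_i(hM)$ and the exponentials $S, S_i$ do not generally commute, so each manipulation must carefully track left/right multiplications and transposes. The cancellation relies essentially on three ingredients used in concert: both conditions of \eqref{erkdcss}, the Hamiltonian identity $M^T J + JM = 0$ coming from $Q = Q^T$, and the symmetry of $H_i$ coming from the potential structure of $f$; removing any one of them breaks the argument.
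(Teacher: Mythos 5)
The paper does not actually prove Theorem~\ref{symplecticity conditions of the ERK}: it is quoted from \cite{WuMei2017JCP} and no in-text argument is given, so there is nothing internal to compare against. Your proof is correct and self-contained, and it follows the standard route (symplecticity of the one-step map via the discrete variational equation) that the cited source also uses. The key steps all check out: $Q=Q^{T}$ gives $M^{T}J+JM=0$, hence $S^{T}JS=J$ and $S_i^{T}JS_i=J$, which disposes of the $O(1)$ term; the first condition in \eqref{erkdcss}, in the two equivalent forms $\bar b_i(hM)^{T}JS=\gamma JS_i$ and $S^{T}J\bar b_i(hM)=\gamma S_i^{T}J$, together with $JK_i=H_i\Phi_i$ and $K_i^{T}=\Phi_i^{T}H_iJ$ (using $J^{-T}=J$), reduces the $O(h)$ term to $\gamma h\sum_i\big(S_i^{T}H_i\Phi_i-\Phi_i^{T}H_iS_i\big)$, and eliminating $S_i$ through the stage relation with $H_i=H_i^{T}$ gives $\gamma h^{2}\sum_{i,j}\big(A_{ij}-A_{ij}^{T}\big)$; the $O(h^{2})$ term, after substituting $\bar b_i(hM)^{T}J\bar b_j(hM)=\gamma J\bar a_{ij}(hM)+\gamma\bar a_{ji}(hM)^{T}J$ (which is exactly the second condition combined with the first) and using $J^{2}=-I$ plus the dummy-index swap, is $\gamma h^{2}\sum_{i,j}\big(A_{ij}^{T}-A_{ij}\big)$. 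The cancellation is exact and, as you implicitly observe, independent of the value of $\gamma$. The only minor points worth making explicit in a written-up version are that the variational stage equations define the $\Phi_i$ implicitly (solvable for $h$ small) and that, although you flag non-commutativity as the main hazard, the coefficients here are functions of $hM$ and therefore commute with $S$ and $S_i$; your argument does not need this, which makes it slightly more general than required.
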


\begin{rem}
We also remark that when $M=0$, these conditions reduce to
\begin{equation}
\begin{array}[c]{ll}%
&\bar{b}_{i}(0)\bar{b}_{j}(0)=\bar{b}_{i}(0)\bar{a}_{ij}(0)+\bar{b}_{j}(0)\bar{a}_{ji}(0),
\quad\ i,j=1,2,\ldots,s,
\end{array}\label{rkdcss}%
\end{equation}
which are the exact symplecticity conditions  of $s$-stage RK
methods.
\end{rem}

\section{Symmetric and symplectic EI}\label{sec3}
In this section, we derive a class of symmetric and symplectic
exponential integrators. To this end, we consider the following
special exponential integrators.

\begin{defi}
\label{scheme EI spe} (See \cite {WuMei2017JCP}) Define a special
kind of
$s$-stage exponential integrators   by%
\begin{equation}
\begin{aligned} \bar{a}_{ij}(h M)=a_{ij}e^{(c_i-c_j) h M},\ \ \bar{b}_{i}(h M)=b_{i}e^{(1-c_i)h
M},\ \ i,j=1,\ldots,s,
\end{aligned}
\label{rev co}%
\end{equation}
where
\begin{equation}c=(c_1,\ldots,c_s)^{\intercal},\ \
b=(b_1,\ldots,b_s)^{\intercal},\ \
A=(a_{ij})_{s\times s}\label{RK co}%
\end{equation} are   the coefficients of an $s$-stage  RK
method. We denote this class of exponential integrators  by SEI.
\end{defi}

For these special exponential integrators,   the following
properties can be derived.
\begin{theo}\label{ERK condition}
\begin{itemize}
\item If the $s$-stage RK method \eqref{RK co} is symmetric, then  the   $s$-stage
SEI \eqref{rev co} is also symmetric.

\item The  SEI \eqref{rev co}  is symplectic if the  RK method \eqref{RK co} is symplectic.

\item The  SEI \eqref{rev co}  is symmetric and symplectic if the  RK method \eqref{RK co} is symmetric and symplectic.

\item If  the $s$-stage RK method \eqref{RK co} is of order $p$, then the  SEI \eqref{rev co}  is also of order
$p$.
 \end{itemize}
\end{theo}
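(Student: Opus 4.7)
The strategy is to substitute the special ansatz (\ref{rev co}) into the abstract symmetry and symplecticity conditions from Theorems \ref{the symmetry conditions of the ERK} and \ref{symplecticity conditions of the ERK}, and to collapse all matrix exponentials via the semigroup identity $e^{sM}e^{tM}=e^{(s+t)M}$. The four bullets will be handled in turn, with the fourth (order) requiring a separate argument via a change of variables.

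For the first bullet, I would insert $\bar{a}_{ij}(hM)=a_{ij}e^{(c_i-c_j)hM}$ and $\bar{b}_i(hM)=b_i e^{(1-c_i)hM}$ into (\ref{erkdc}) and use the RK symmetry relation $c_i=1-c_{s+1-i}$ to rewrite each exponent. For instance, $e^{c_i hM}\bar{b}_{s+1-j}(-hM)=b_{s+1-j}e^{c_i hM}e^{-c_j hM}=b_{s+1-j}e^{(c_i-c_j)hM}$, and analogously $\bar{a}_{s+1-i,s+1-j}(-hM)=a_{s+1-i,s+1-j}e^{(c_i-c_j)hM}$. The common exponential factor then cancels, leaving exactly the scalar RK symmetry conditions (\ref{rkdc}); the same reduction handles the condition on $\bar{b}_i$.

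For the second bullet, I would substitute the ansatz into (\ref{erkdcss}). A direct computation using the semigroup property yields that the three quantities $\bar{b}_i(hM)^T J\bar{b}_j(hM)$, $\bar{b}_i(hM)^T J S S_i^{-1}\bar{a}_{ij}(hM)$, and $\bar{a}_{ji}(hM)^T S_j^{-T}S^T J\bar{b}_j(hM)$ equal, respectively, $b_ib_j$, $b_i a_{ij}$, $b_j a_{ji}$ times the common matrix $e^{(1-c_i)hM^T}Je^{(1-c_j)hM}$. The second line of (\ref{erkdcss}) therefore reduces to the scalar identity $b_ib_j=b_i a_{ij}+b_j a_{ji}$, which is precisely the RK symplecticity condition (\ref{rkdcss}). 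The first line of (\ref{erkdcss}) is treated analogously, invoking that $e^{tM}$ is itself symplectic in the Hamiltonian setting (implicit when $M=J^{-1}Q$).

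The third bullet is then immediate from combining the first two. For the fourth bullet, my plan is to introduce $z(t)=e^{-(t-t_0)M}y(t)$, which transforms (\ref{prob}) into the non-autonomous system $z'(t)=e^{-(t-t_0)M}f(e^{(t-t_0)M}z(t))$. Applying the RK method with coefficients (\ref{RK co}) to this transformed system and then reverting via $Y_i=e^{c_i hM}Z_i$ and $y_1=e^{hM}z_1$ reproduces exactly the SEI formulas (\ref{rev co}) inside the scheme (\ref{erk dingyi}); since the transformed vector field is as smooth as $f$, the order $p$ of the underlying RK method transfers to the SEI. Throughout, the main obstacle is not novelty of ideas but meticulous bookkeeping with exponents and transposes; the subtlest point is the first line of (\ref{erkdcss}) in the symplecticity bullet, which silently encodes symplecticity of the exponential flow of $M$.
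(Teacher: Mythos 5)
Your argument is correct, and for the first and third bullets it coincides with the paper's proof: substitute \eqref{rev co} into \eqref{erkdc}, collapse the exponentials via the semigroup property, and observe that everything reduces to the scalar RK symmetry conditions \eqref{rkdc}. Where you genuinely diverge is in the second and fourth bullets, which the paper does not prove at all but simply attributes to Theorems 3.1 and 3.2 of \cite{WuMei2017JCP}; you supply self-contained arguments instead. For symplecticity, your computation is right: all three quantities in the second line of \eqref{erkdcss} carry the common factor $e^{(1-c_i)hM^{\intercal}}Je^{(1-c_j)hM}$, so the condition collapses to $b_ib_j=b_ia_{ij}+b_ja_{ji}$, and the first line follows from symplecticity of $e^{tM}$ when $M=J^{-1}Q$ (since $M^{\intercal}J+JM=0$), yielding $\gamma=b_i$ there --- note this makes $\gamma$ depend on $i$, so the condition in Theorem \ref{symplecticity conditions of the ERK} must be read with $\gamma=\gamma_i$, a point the paper glosses over. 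For the order statement, your identification of the SEI as the Lawson (integrating-factor) transform of the RK method under $z(t)=e^{-(t-t_0)M}y(t)$ is exactly the right mechanism and reproduces \eqref{rev co} verbatim; the only caveat worth recording is that the resulting order $p$ is the nonstiff order, since the error constant involves derivatives of the transformed vector field and hence powers of $\norm{M}$, which is harmless in the finite-dimensional Hamiltonian setting of this paper. In short: same substitution strategy where the paper argues, and a legitimate, more self-contained derivation where the paper merely cites.
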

\begin{proof}
Inserting \eqref{rev co} into the symmetry  conditions of
\eqref{erkdc} yields
\begin{equation*}
\begin{array}[c]{ll}%
c_{i}&=1-c_{s+1-i},\\
 a_{ij}e^{(c_i-c_j) h
M}&=e^{c_{i}hM}b_{s+1-j}e^{-(1-c_{s+1-j})h
M}-a_{s+1-i,s+1-j}e^{-(c_{s+1-i}-c_{s+1-j}) h
M}\\&=(b_{s+1-j}-a_{s+1-i,s+1-j})e^{(c_i-c_j) h
M},\\
b_{i}e^{(1-c_i)h M}&=e^{hM}b_{s+1-i}e^{-(1-c_{s+1-i})h M}=
b_{s+1-i}e^{(1-c_i)h M},
\end{array}
\end{equation*}
which can be simplified as the symmetry conditions \eqref{rkdc} of
RK methods. Thus the first statement is true. The second statement
can be obtained immediately by considering Theorem 3.2 of
\cite{WuMei2017JCP}. Based on the above two results, the third one
holds. The last result comes from Theorem 3.1 of
\cite{WuMei2017JCP}.
\end{proof}

In what follows, based on Theorem \ref{ERK condition} we construct
some practical symmetric and symplectic  SEI integrators.

%\begin{theo}\label{ixxx}%
%A Runge-Kutta method\eqref{erk dingyi} is of order p iff
%\begin{equation}
%\begin{array}[c]{ll}%
%\textstyle\sum
%\limits_{j=1}^{s}\bar{b}_{j}\Phi_{j}(t)=\dfrac{1}{{\gamma}(t)}
%\end{array}\label{iff-order}%
%\end{equation}
%\end{theo}

\subsection{One-stage symmetric and
symplectic SEI} First consider an one-stage RK method with the
coefficients: \[%
\begin{tabular}
[c]{l}%
\begin{tabular}
[c]{c|c}%
$c_{1}$ & $a_{11}$\\\hline
& $\raisebox{-1.3ex}[0.5pt]{$ b_{1}$}$%
\end{tabular}
\end{tabular}%
\]
According to \eqref{rkdc} and \eqref{rkdcss}, this method is
symmetric and symplectic if
\begin{equation}
\begin{array}[c]{ll}%
c_1=1/2,\  a_{11}=b_1-a_{11},\ b_1^2=2b_1a_{11}.
\end{array}\label{one stage}%
\end{equation}
From these formulae, it follows that
\begin{equation}
\begin{array}[c]{ll}%
c_1=1/2,\ \ a_{11}=\frac{1}{2}b_1.
\end{array}\label{erkneighteen}
\end{equation}
This gives a class of symmetric and symplectic exponential
integrators by considering \eqref{rev co}. As an example, we choose
$b_1=1$ and denote the method as SSSEI1s2. It can be checked that
this   RK method is implicit midpoint rule. Thus the  symmetric and
symplectic  SEI is of order two.
\subsection{Two-stage symmetric and
symplectic SEI} Consider a two-stage  RK method whose coefficients
are given
 by a Butcher tableau:
 \[%
\begin{tabular}
[c]{c|cc}%
$c_{1}$ & $a_{11}$& $a_{12}$ \\
$c_{2}$ & $a_{21}$ & $a_{22}$\\\hline &
$\raisebox{-1.3ex}[0.5pt]{$b_1$}$ &
$\raisebox{-1.3ex}[0.5pt]{$b_2$}$\\ &
\end{tabular}
\]
The symmetry conditions of this method are
\begin{equation}
\begin{array}{ll}%
c_{1}=1-c_{2},\ b_1=b_2,\ \ a_{11}=b_2-a_{22},\ a_{12}=b_1-a_{21}, \
a_{21}=b_2-a_{12},\ a_{22}=b_1-a_{11}.
\end{array}\label{erkschemell}
\end{equation}
The RK method is symplectic if
\begin{equation}
\begin{array}[c]{ll}%
&b_1^2=2b_1a_{11},\ \ b_1b_2=b_1a_{12}+b_2a_{21},\ \
b_2^2=2b_2a_{22}.
\end{array}\label{erkschemelo}
\end{equation}
According to \eqref{erkschemell} and \eqref{erkschemelo}, we obtain
\begin{equation}
\begin{array}[c]{ll}%
c_1=1-c_2,\ b_1=b_2,\ \ a_{11}=a_{22}=b_1/2,\  a_{12}+a_{21}=b_1.
\end{array}\label{erkn22}
\end{equation}
In the light of the third-order conditions of RK methods (see
\cite{hairer2006})
\begin{equation}
\begin{array}[c]{ll}%
a_{11}+a_{12}=c_1,\ &a_{21}+a_{22}=c_2,\ b_1+b_2=1,\
b_1c_1+b_2c_2=1/2,\\ b_1c^2_1+b_2c^2_2=1/3,\ &b_1(a_{11}c_1
+a_{12}c_2)+b_2(a_{21}c_1+a_{22}c_2)=1/6,
\end{array}\label{erkn23}
\end{equation}
 we choose
the parameters by
\begin{equation}
\begin{array}[c]{ll}%
c_1=\frac{3-\sqrt{3}}{6},\ b_1=1/2,\ a_{12}=\frac{3-2\sqrt{3}}{12}.
\end{array}\label{erkn22-1}
\end{equation}
This choice as well as \eqref{erkn22} and \eqref{rev co} gives an
symmetric and symplectic SEI. Moreover, it can be seen that the
corresponding RK method is Gauss method of order four (see
\cite{hairer2006}). Therefore, the SEI is also of order four, which
is denoted by SSSEI2s4.

\subsection{Three-stage symmetric and
symplectic SEI} We turn to considering   three-stage symmetric and
symplectic integrators. The following Butcher tableau
 describes a three-stage  RK method:
\[%
\begin{tabular}
[c]{c|ccc}%
$c_{1}$ & $a_{11}$ & $a_{12}$ & $a_{13}$\\
$c_{2}$ & $a_{21}$ & $a_{22}$& $a_{23}$\\
$c_{3}$ & $a_{31}$ & $a_{32}$ & $a_{33}$\\\hline &
$\raisebox{-1.3ex}[0.5pt]{$b_1$}$ &
$\raisebox{-1.3ex}[0.5pt]{$b_2$}$ &
$\raisebox{-1.3ex}[0.5pt]{$b_3$}$\\ &
\end{tabular}
\]
 This method is symmetric if the following conditions are true
\begin{equation}
\begin{array}[c]{ll}%
c_1=1-c_3,\
& c_2=1/2,\\
b_1=b_3,\
&b_1=a_{31}+a_{13}=a_{21}+a_{23}=a_{11}+a_{33},\\
b_2=a_{32}+a_{12}=2a_{22},\ &
b_3=a_{33}+a_{11}=a_{23}+a_{21}=a_{13}+a_{31}.
\end{array}\label{erkschemmellla}
\end{equation}
The RK methods are symplectic if
\begin{equation}\label{sym3}
\begin{array}[c]{lll}
&b_{1}a_{11}+b_{1}a_{11}=b_{1}^{2},\ \ \ \ b_{2}a_{21}+b_{1}a_{12}=b_{1}b_{2}, \ \ \ b_{3}a_{31}+b_{1}a_{13}=b_{1}b_{3},\\
&b_{2}a_{22}+b_{2}a_{22}=b_{2}^{2}, \ \ \ \
b_{3}a_{33}+b_{3}a_{33}=b_{3}^{2}, \ \ \ \ \ \
b_{3}a_{32}+b_{2}a_{23}=b_{2}b_{3}.
\end{array}
\end{equation}
By the formulae \eqref{erkschemmellla} and   \eqref{sym3}, the
coefficients can be given as
\[%
\begin{tabular}
[c]{c|ccc}%
$c_{1}$ & $\frac{b_1}{2}$ & $0$ & $0$\\
$\frac{1}{2}$ & $b_1$ & $\frac{b_2}{2}$& $0$\\
$1-c_{1}$ & $b_1$ & $b_2$ & $\frac{b_1}{2}$\\\hline &
$\raisebox{-1.3ex}[0.5pt]{$b_1$}$ &
$\raisebox{-1.3ex}[0.5pt]{$b_2$}$ &
$\raisebox{-1.3ex}[0.5pt]{$b_1$}$\\ &
\end{tabular}
\]
This result as well as \eqref{rev co} yields a class of symmetric
and symplectic SEI. As an example and following \cite{Sanz-Serna91},
we consider
$$c_1=\frac{8-2\sqrt[3]{2}-\sqrt[3]{4}}{12},\ \ b_1=\frac{4+2\sqrt[3]{2}+\sqrt[3]{4}}{6},\ \ b_{2}=\frac{-1-2\sqrt[3]{2}-\sqrt[3]{4}}{3}$$ and denote the method as SSSEI3s4.
 According to the fourth-order conditions of RK methods (see \cite{hairer2006})
\begin{equation}
\begin{array}[c]{ll}%
a_{11}+a_{12}+a_{13}=c_1,\ \ a_{21}+a_{22}+a_{23}=c_2,\ \
a_{31}+a_{32}+a_{33}=c_3,\ \ b_1+b_2+b_3=1,\\
b_1c_1+b_2c_2+b_3c_3=1/2,\ \ b_1c^2_1+b_2c^2_2+b_3c^2_3=1/3,\ \
b_1c^3_1+b_2c^3_2+b_3c^3_3=1/4,
\\b_1(a_{11}c_1+a_{12}c_2+a_{13}c_3)+b_2(a_{21}c_1+a_{22}c_2+a_{23}c_3) +b_3(a_{31}c_1+a_{32}c_2+a_{33}c_3)=1/6,\\
b_1c_1(a_{11}c_1+a_{12}c_2+a_{13}c_3)+b_2c_2(a_{21}c_1+a_{22}c_2+a_{23}c_3)+b_3c_3(a_{31}c_1+a_{32}c_2+a_{33}c_3)=1/8,\\
b_1(a_{11}c^2_1+a_{12}c^2_2+a_{13}c^2_3)+b_2(a_{21}c^2_1+a_{22}c^2_2+a_{23}c^2_3)+b_3(a_{31}c^2_1+a_{32}c^2_2+a_{33}c^2_3)=1/12,\\
b_1a_{11}(a_{11}c_1+a_{12}c_2+a_{13}c_3)+b_1a_{12}(a_{21}c_1+a_{22}c_2+a_{23}c_3)
+b_1a_{13}(a_{31}c_1+a_{32}c_2+a_{33}c_3)\\
\quad
+b_2a_{21}(a_{11}c_1+a_{12}c_2+a_{13}c_3)  +b_2a_{22}(a_{21}c_1+a_{22}c_2+a_{23}c_3)+b_2a_{23}(a_{31}c_1+a_{32}c_2+a_{33}c_3)\\
\quad
+b_3a_{31}(a_{11}c_1+a_{12}c_2+a_{13}c_3)+b_3a_{32}(a_{21}c_1+a_{22}c_2+a_{23}c_3)
+b_3a_{33}(a_{31}c_1+a_{32}c_2+a_{33}c_3)=1/24,
\end{array}\label{erkschemelllb}
\end{equation}
it can be  checked that the coefficients of the RK method satisfy
all the conditions. Thus this RK method is of order four and the
symmetric and symplectic SEI has same order. This symmetric and
symplectic SEI is denoted by SSSEI3s4.

\section{Numerical experiments} \label{sec4}
\label{sec:MSERKN5}
 This section presents two numerical
 experiments to
 show the remarkable efficiency  of the new integrators  as compared with some existing
 RK methods. The integrators  for comparisons are chosen as:
\begin{itemize}
  \item SSSEI1s2: the one-stage symmetric and symplectic EI of order two presented in this letter;
  \item SSSEI2s4: the two-stage symmetric and symplectic EI of order four presented in this letter;
    \item SSSEI3s4: the three-stage symmetric and symplectic EI of order four presented in this letter;
  \item SSRK1s2: the one-stage symmetric and symplectic RK method of order two obtained by letting $M=0$ for SSSEI1s2 (implicit
midpoint rule);
  \item SSRK2s4: the two-stage symmetric and symplectic RK method of order four obtained by letting $M=0$ for SSSEI2s4
  (Gauss method of order four);
    \item SSRK3s4: the three-stage symmetric and symplectic RK method of order four obtained by letting $M=0$ for
    SSSEI3s4 (the method was given in \cite{Sanz-Serna91}).
\end{itemize}

\textbf{Problem 1.} As the first numerical example, we consider the
Duffing equation defined by
\[
\left(
\begin{array}
[c]{c}%
q\\
p
\end{array}
\right)^{\prime}%
= \left(
\begin{array}
[c]{c}%
0\ \ \ \ \ \ \ \ \ \ \ \ \ \ \ 1\\
-\omega^2-k^2\ \ \ 0
\end{array}
\right) \left(
\begin{array}
[c]{c}%
q\\
p
\end{array}
\right) + \left(
\begin{array}
[c]{c}%
0\\
2k^2q^3
\end{array}
\right), \ \ \  \left(
\begin{array}
[c]{c}%
q(0)\\
p(0)
\end{array}
\right) = \left(
\begin{array}
[c]{c}%
0\\
\omega
\end{array}
\right).
\]
  It is a Hamiltonian system with the Hamiltonian $
H(q,q)=\dfrac{1}{2}p^{2}+\dfrac{1}{2}(\omega^2+k^2)q^2-\dfrac{k^2}{2}q^4.$
The exact solution of this system is $q(t)=sn(\omega t;k/\omega)$
with the Jacobi elliptic function $sn$. For this problem, we choose
$k=0.07$, $t_{\mathrm{end}}=20$, $\omega=20$ and $h=\dfrac{1}{2^i}$
for $i=3,4,5,6.$ The efficiency curves are shown in Figure
\ref{fig:problem11} (i). We integrate this problem with a fixed
stepsize  $h=1/10$ in the interval $[0,10^{i}]$ for  $i=0,1,2,3$.
The results of energy conservation are presented in Figure
\ref{fig:problem11} (ii).
\begin{figure}[ptbh]
\centering\tabcolsep=1mm
\begin{tabular}
[c]{ccc}%
\includegraphics[width=6cm,height=6cm]
{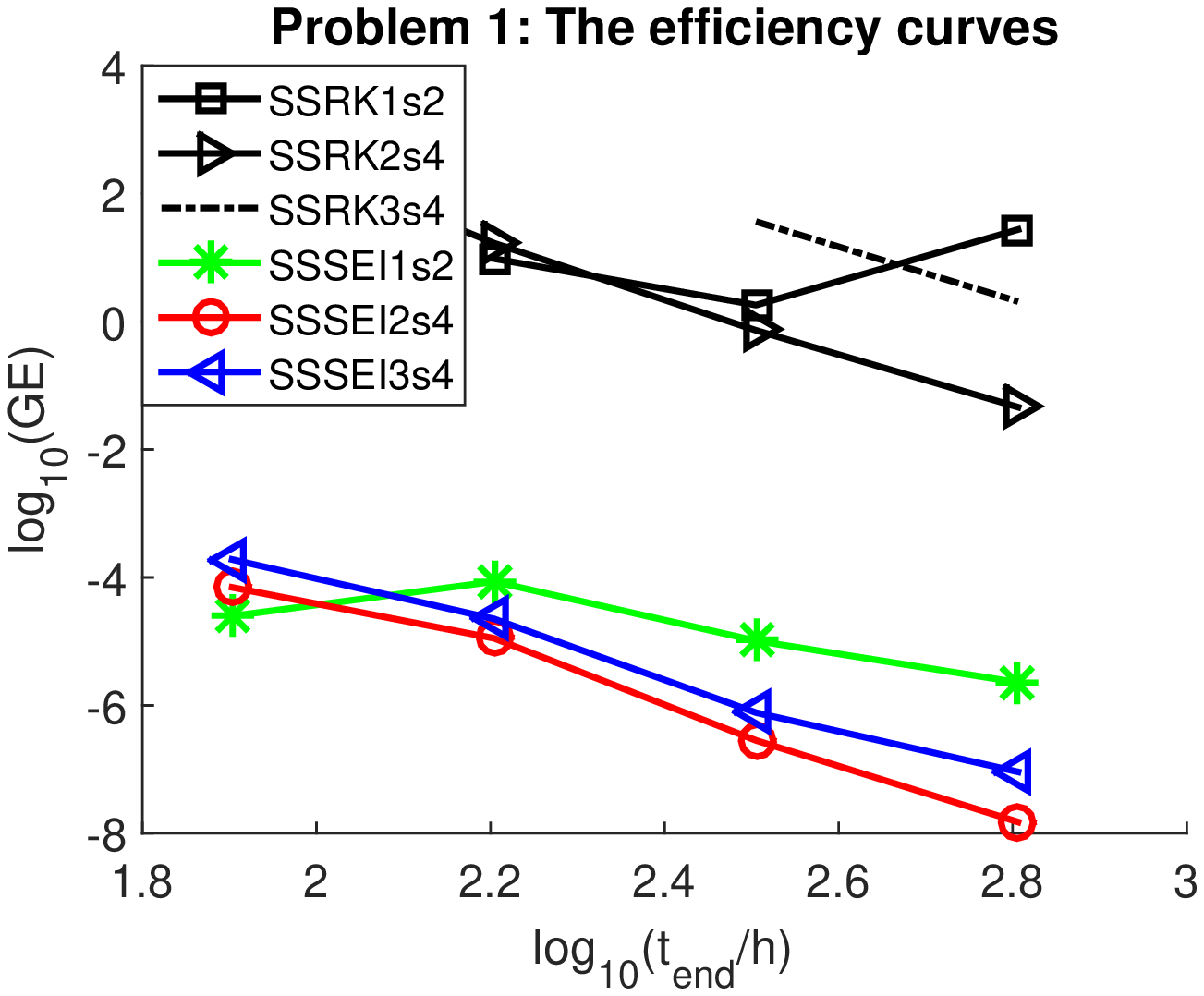} & \includegraphics[width=6cm,height=6cm] {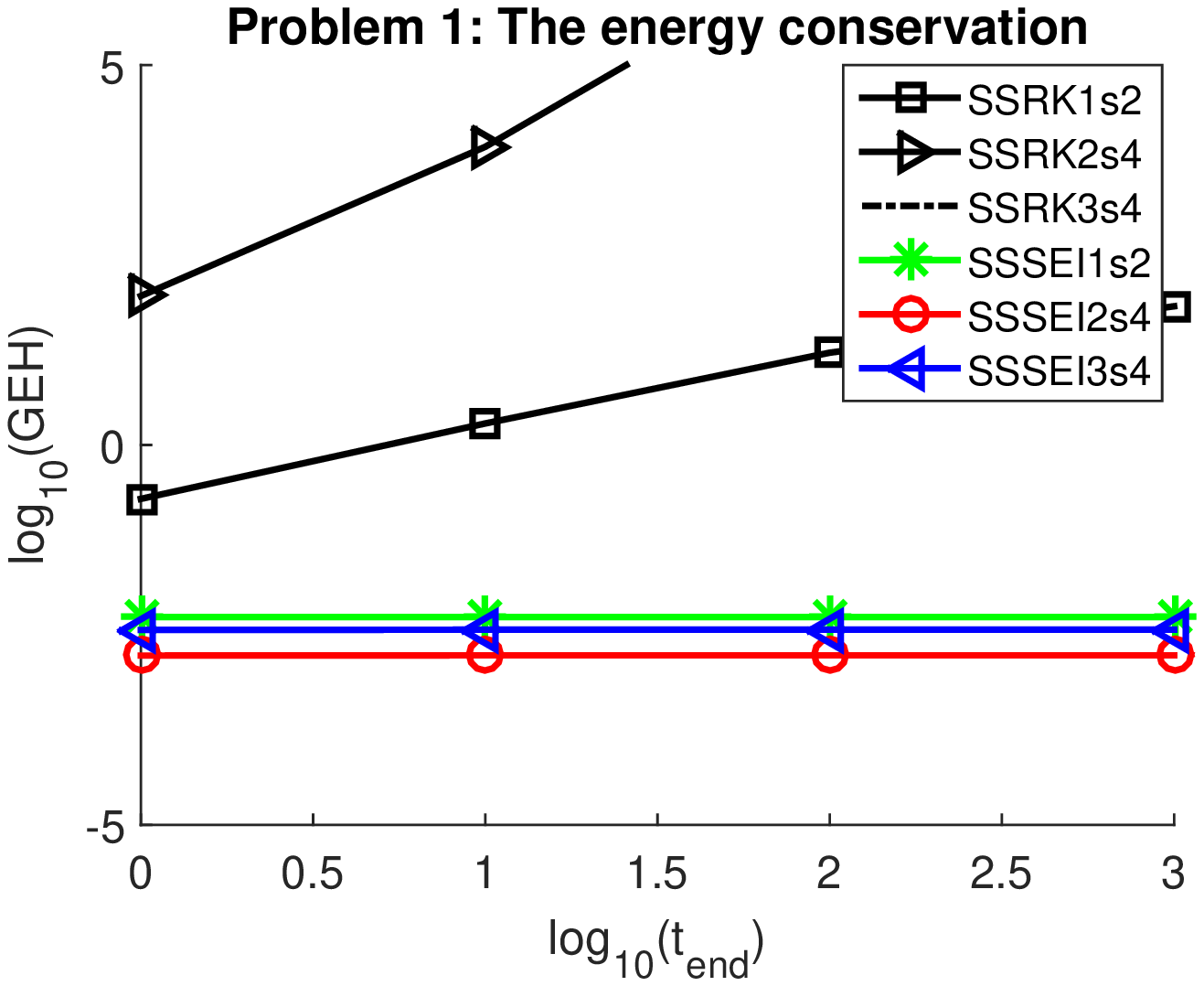} &
\\
{\small (i)} & {\small (ii)} %
\end{tabular}
\caption{(i): The logarithm of the global error ($GE$) over the
integration interval against $t_{end}/h$. (ii):\ The logarithm of
the maximum global error of Hamiltonian
$GEH=\max|H_{n}-H_{0}|$ against $\log_{10}(t_{\mathrm{end}})$.}%
\label{fig:problem11}%
\end{figure}

\textbf{Problem 2.} The second numerical example is  the following
averaged system in wind-induced
 oscillation
\begin{equation*}
\begin{aligned}& \left(
                   \begin{array}{c}
                     x_1 \\
                      x_2 \\
                   \end{array}
                 \right)
'= \left(
    \begin{array}{cc}
      -\zeta& -\lambda\\
       \lambda & -\zeta \\
    \end{array}
  \right)\left(
                   \begin{array}{c}
                     x_1 \\
                      x_2 \\
                   \end{array}
                 \right)+
\left(
                                                                           \begin{array}{c}
                                                                           x_1x_2\\
\frac{1}{2}(x_1^2-x_2^2)
                                                                           \end{array}
                                                                         \right),
\end{aligned}\end{equation*}
 where  $\lambda=r\sin(\theta)$ is a detuning parameter and $\zeta= r\cos(\theta) \geq
0$ is a damping factor with $r \geq 0,\ 0 \leq\theta \leq \pi /2.$
The first integral (when $\theta =\pi/2$) or Lyapunov function (when
$\theta <\pi/2$)  of this system  is
$$H=\frac{1}{2}r(x_1^2+x_2^2)-\frac{1}{2}\sin(\theta)\big(x_1x_2^2-\frac{1}{3}x_1^3\big)+\frac{1}{2}\cos(\theta)\big(-x_1^2x_2+\frac{1}{3}x_2^3\big).$$
 We choose the  initial values   $x_1(0)=0,\ x_2(0)=1.$ Firstly we consider  $\theta=\pi/2,\ r=20$  and  solve the problem on the
interval $[0,10]$ with  $h=\frac{1}{2^i}$ for $i=3,4,5,6$. The
efficiency curves are shown in Figure \ref{fig:problem21} (i). Then
this problem is integrated with  $h=\dfrac{1}{20}$ on the interval
$[0,10^{i}],\
 i=0,1,2,3.$ See Figure \ref{fig:problem21} (ii) for the energy
conservation for different methods.  Secondly we  choose
$\theta=\pi/2-10^{-4}$  and the efficiency curves are shown in
Figure \ref{fig:problem21} (iii) on $[0,10]$ with
$h=\frac{1}{2^{i}}, \ i=3,4,5,6.$
\begin{figure}[ptbh]
\centering\tabcolsep=1mm
\begin{tabular}
[c]{ccc}%
\includegraphics[width=5cm,height=6cm]
{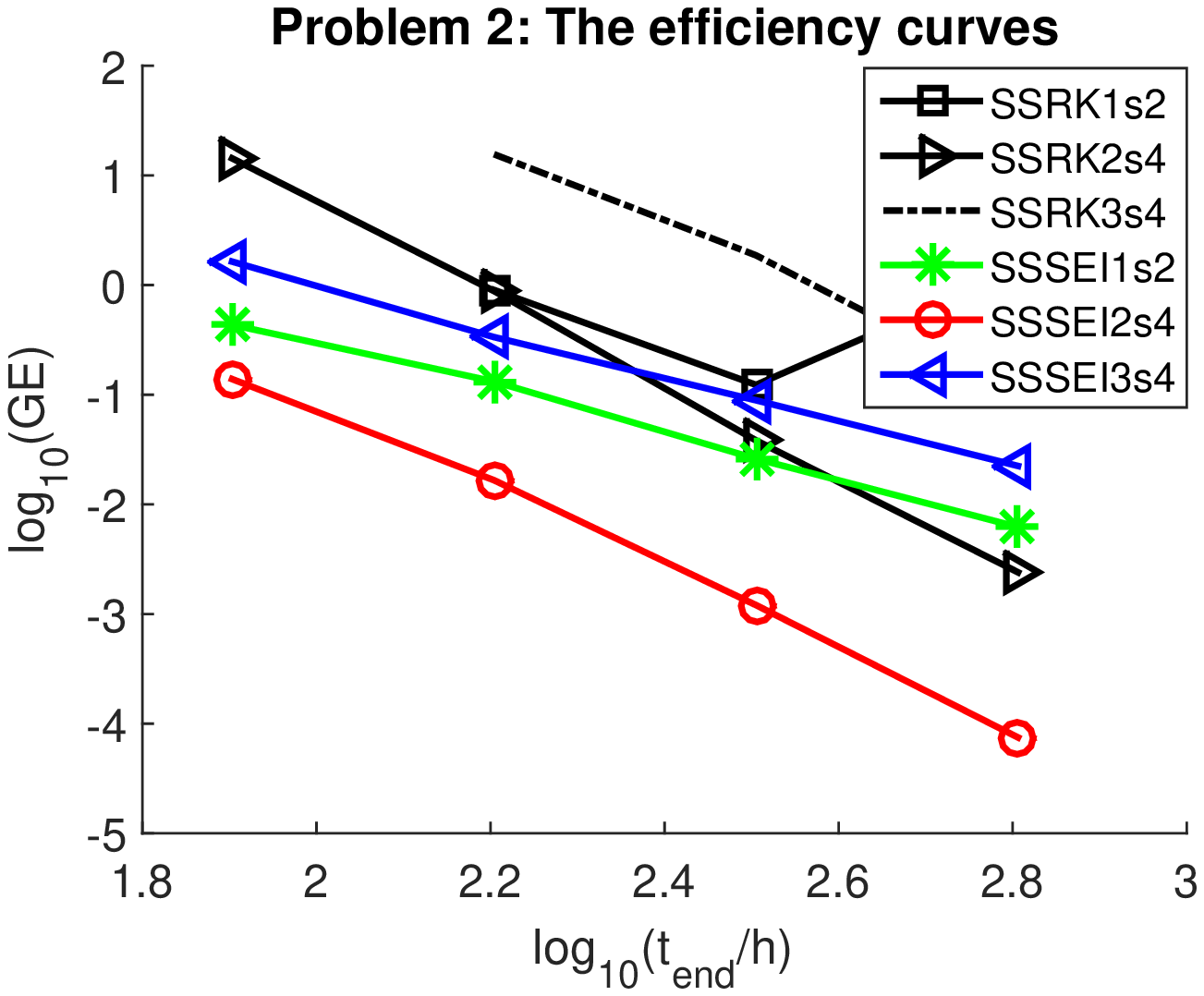} & \includegraphics[width=5cm,height=6cm] {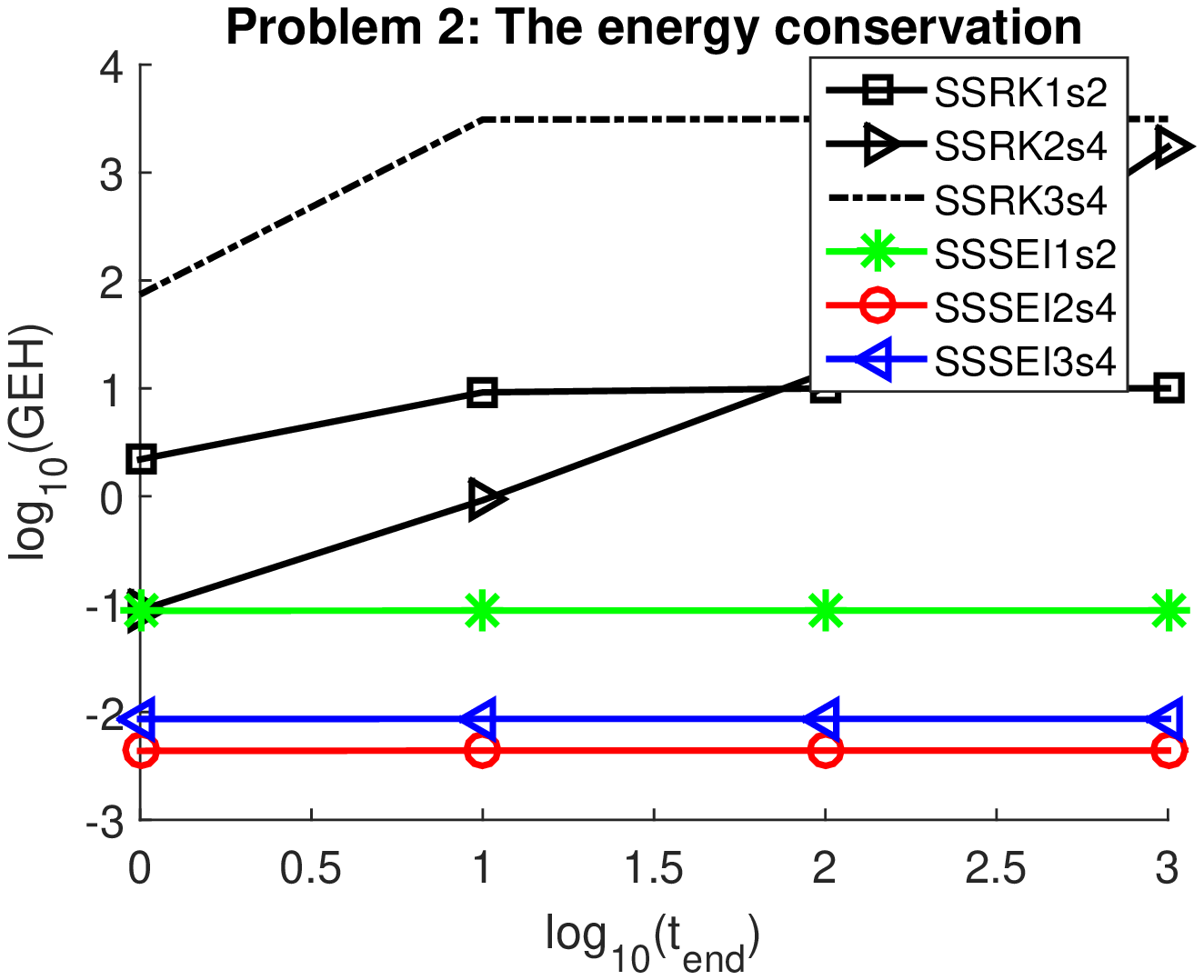} &
\includegraphics[width=5cm,height=6cm]
{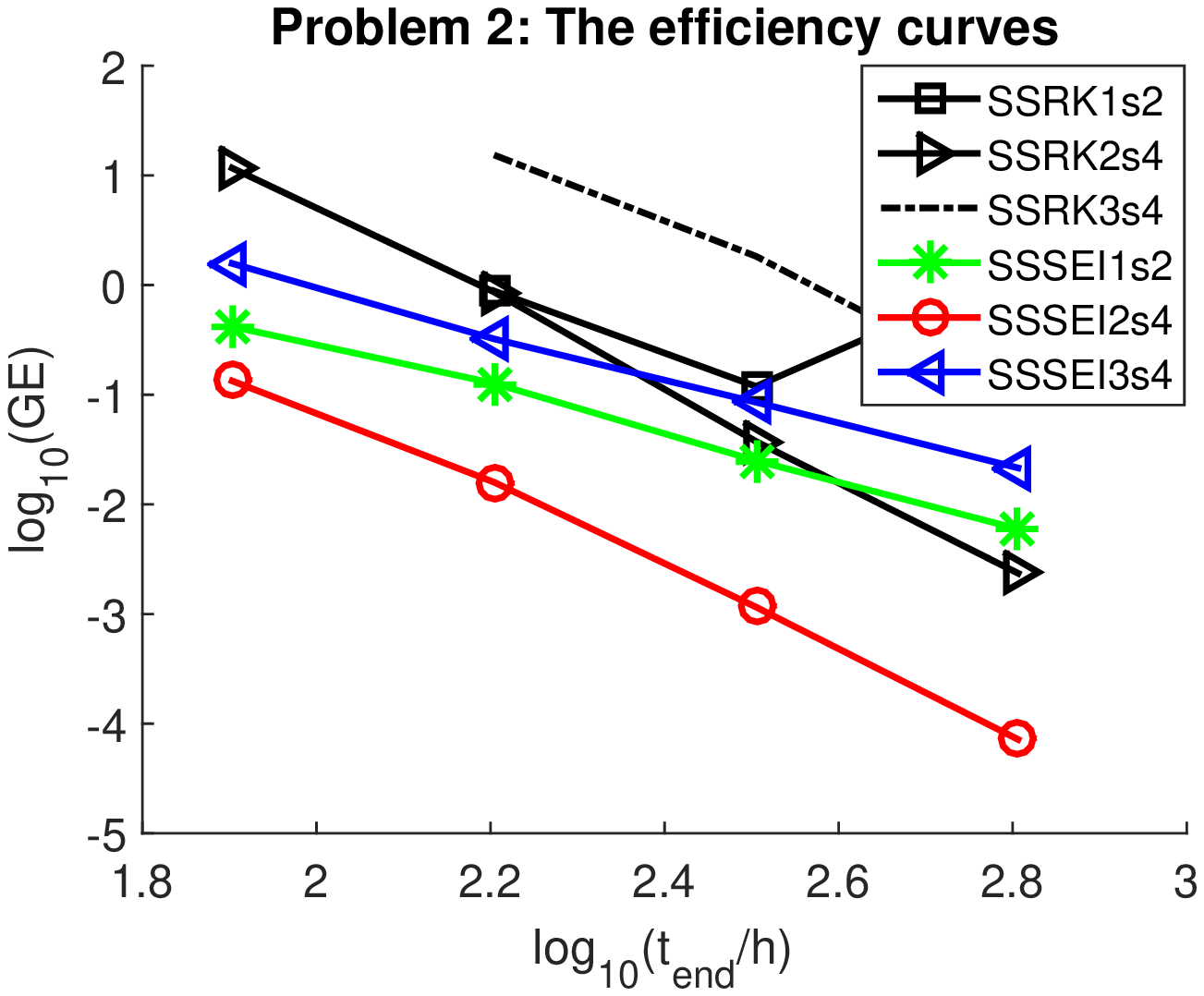}\\
{\small (i)} & {\small (ii)} & {\small (iii)}%
\end{tabular}
\caption{(i): The logarithm of the global error ($GE$) over the
integration interval against $t_{end}/h$. (ii):\ The logarithm of
the maximum global error of Hamiltonian $GEH=\max|H_{n}-H_{0}|$
against $\log_{10}(t_{\mathrm{end}})$. (iii): The logarithm of the
global error ($GE$) over the
integration interval against $t_{end}/h$.}%
\label{fig:problem21}%
\end{figure}

From the numerical results, it follows clearly that the symmetric
and symplectic exponential integrators behave   much better than
 symmetric and symplectic RK methods.

\section{Conclusions and discussions}
\label{sec5}

In this letter, in order to solve the differential equations
\eqref{prob} by using symmetric and symplectic methods, we present
the symmetry and symplecticity conditions for exponential
integrators. Then based on these conditions, we consider a special
kind of  exponential integrators and construct some practical
symmetric and symplectic exponential integrators. The remarkable
efficiency of the new integrators is shown by the numerical results
from two numerical experiments in comparison with some existing RK
methods in the literature.


\begin{thebibliography}{99}

\bibitem{Berland-2005} H. Berland, B. Owren, B. Skaflestad,  B-series and order conditions
for exponential integrators,  SIAM J. Numer. Anal.   43 (2005)
1715-1727.





\bibitem{Caliari-2009}  M. Caliari, A. Ostermann,  Implementation of exponential
Rosenbrock-type integrators,  Appl. Numer. Math.    59 (2009)
568-581.

\bibitem{Calvo-2006}  M.P. Calvo, C. Palencia,  A class of explicit multistep exponential
integrators for semilinear problems,  Numer. Math.   102 (2006)
367-381.

\bibitem{Celledoni-2008}  E. Celledoni, D. Cohen,   B. Owren,  Symmetric exponential integrators
with an application to the cubic Schr\"{o}dinger equation, Found.
Comput. Math.    8 (2008) 303-317.





\bibitem {Grimm2006} V. Grimm,  M. Hochbruck,  Error analysis
of exponential integrators for oscillatory second-order differential
equations,  J. Phys. A: Math. Gen.   39 (2006) 5495-5507.

\bibitem {hairer2006}  E. Hairer, C. Lubich, G. Wanner, Geometric Numerical
Integration: Structure-Preserving Algorithms for Ordinary
Differential Equations, 2nd edn.  Springer-Verlag, Berlin,
Heidelberg, 2006.

\bibitem {Hochbruck2010} M. Hochbruck,  A. Ostermann,  Exponential integrators,
 Acta Numer.   19 (2010)  209-286.


\bibitem {Hochbruck2009} M. Hochbruck, A. Ostermann,    J.
Schweitzer,  Exponential rosenbrock-type methods,  SIAM J. Numer.
Anal.   47 (2009)   786-803.



\bibitem {Iserles2002a}   A. Iserles, On the global error of discretization methods for
highly-oscillatory ordinary differential equations,  BIT Num. Math.
42 (2002)     561-599.

\bibitem {Iserles2002b}   A. Iserles,   Think globally, act locally: solving
highly-oscillatory ordinary differential equations,  Appl. Num.
Anal.  43 (2002) 145-160.


\bibitem {Kassam2005}  A.K. Kassam,  L.N.Trefethen,  Fourth-order time-stepping for stiff
PDEs,  SIAM J. Sci. Comput.    26 (2005) 1214-1233.



\bibitem {Khanamiryan2008}M. Khanamiryan, Quadrature methods for highly
oscillatory linear and nonlinear systems of  ordinary differential
equations: part I,  BIT Num. Math.   48 (2008) 743-762.



\bibitem {Krogstad2003} S. Krogstad,  Generalized integrating factor methods for stiff
PDEs,  J. Comput. Phys.    203 (2005) 72-88.



\bibitem{WuMei2017JCP}L. Mei, X. Wu, Symplectic exponential Runge-Kutta methods for solving
nonlinear Hamiltonian systems, J. Comput. Phys. 338 (2017) 567--584.



 \bibitem {Ostermann2006}  A. Ostermann,  M. Thalhammer,  W.M. Wright,  A class of explicit exponential
general linear methods,  BIT Numer. Math.   46 (2006) 409-431.



 \bibitem {Sanz-Serna91} J.M. Sanz-Serna, L. Abia, Order conditions for canonical Runge--Kutta
schemes, SIAM J. Numer. Analy. 28 (1991) 1081-1096.


  \bibitem{wang-2016}  B. Wang, A. Iserles,  X. Wu,
Arbitrary--order trigonometric  Fourier collocation methods  for
multi-frequency oscillatory  systems,    Found. Comput. Math. 16
(2016) 151-181.


\bibitem {Wang2018} B. Wang, T. Li, Y. Wu, Arbitrary-order functionally fitted
energy-diminishing methods for gradient systems, Appl. Math. Lett.
83 (2018) 130-139



\bibitem{IMA2018} B. Wang,  X. Wu,  The formulation and analysis of energy-preserving schemes for solving high-dimensional nonlinear Klein-Gordon
equations. IMA. J. Numer. Anal.  DOI: 10.1093/imanum/dry047 (2018)



\bibitem {Wang2017} B. Wang, H. Yang, F. Meng,
Sixth order symplectic and symmetric explicit ERKN schemes for
solving multi-frequency oscillatory nonlinear Hamiltonian equations,
Calcolo, 54 (2017) 117-140


\bibitem {wu-book}   X. Wu,     B. Wang,   Recent Developments in Structure-Preserving Algorithms for
Oscillatory Differential Equations.  Springer Nature Singapore Pte
Ltd, 2018

\end{thebibliography}
\end{document}